\title{Skolem-Mahler-Lech type theorems and Picard-Vessiot theory}
\author{Michael Wibmer}
\newtheorem{theo}{Theorem}[section]
\newtheorem{lemma}[theo]{Lemma}
\newtheorem{prop}[theo]{Proposition}
\newtheorem{cor}[theo]{Corollary}
\newtheorem{defi}[theo]{Definition}
\newtheorem{rem}[theo]{Remark}
\theoremstyle{definition}
\newtheorem{ex}[theo]{Example}
\newcommand{\oo}{\mathcal{O}}
\newcommand{\ida}{\mathfrak{a}}
\newcommand{\Gl}{\operatorname{GL}}
\newcommand{\V}{\mathbb{V}}
\newcommand{\A}{\mathbb{A}}
\newcommand{\s}{\sigma}
\newcommand{\seq}{\operatorname{Seq}}
\newcommand{\N}{\mathbb{N}}
\begin{document}

\maketitle

\begin{abstract}
We show that three problems involving linear difference equations with rational function coefficients are essentially equivalent. The first problem is the generalization of the classical Skolem-Mahler-Lech theorem to rational function coefficients. The second problem is the question whether or not for a given linear difference equation there exists a Picard-Vessiot extension inside the ring of sequences. The third problem is a certain special case of the dynamical Mordell-Lang conjecture.

This allows us to deduce solutions to the first two problems in a particular but fairly general special case.
\end{abstract}
 \footnotetext{e-mail: michael.wibmer@matha.rwth-aachen.de}

\section{Introduction}

Let $k$ be a field of characteristic zero and \[E=\s^n(y)+h_{n-1}\s^{n-1}(y)+\cdots+h_0y\] a linear difference equation with rational function coefficients $h_{n-1},\ldots,h_0\in k(z)$. We shall be concerned with the following two problems:

\bigskip

\noindent {\bf Problem SML:}
Let $f\colon \N\to k$ be a sequential solution of $E$, that is
\[f(i+n)+h_{n-1}(i)f(i+n-1)+\cdots+h_0(i)f(i)=0 \text{ for } i\gg 0.\]
Is it true that the set $\{i\in \N|\ f(i)=0\}$ is a finite union of arithmetic progressions?

\bigskip

\noindent {\bf Problem PV:}
Does there exist a Picard-Vessiot extension of $k(z)$ for $E=0$ inside the ring of $k$-valued sequences?

\bigskip

As we shall see shortly these two problems are intimately connected.
If the difference equation $E$ has constant coefficients, i.e., if $h_{n-1},\ldots,h_0\in k$, then the answer to problem SML is affirmative. This result is known as the Skolem-Mahler-Lech theorem. The reader is referred to the introduction of \cite{Bell:AGeneralizedSkolemMahlerLechTheorem} for proper attributions and more background on this celebrated theorem. Using $p$-adic techniques it has recently been shown in \cite{Belletal:OnTheSetOfZerocoefficientsofafunctionSatisfying} that the answer is also affirmative if $h_{n-1},\ldots,h_1\in k[z]$ are polynomials and $h_0\in k\smallsetminus\{0\}$ is a non-zero constant. Here we will give a new proof of this result based on Picard-Vessiot theory and a special case of the dynamical Mordell-Lang conjecture established by J. Bell in \cite{Bell:AGeneralizedSkolemMahlerLechTheorem}. We also explain how a certain special case of the dynamical Mordell-Lang conjecture would imply an affirmative solution to problems SML and PV in general. We solve Problem PV affirmatively under the restriction $h_{n-1},\ldots,h_1\in k[z]$ and $h_0\in k\smallsetminus\{0\}$.

\bigskip

{\bf Acknowledgement:} I would like to thank Jason Bell, Ruyong Feng and Julia Hartmann for helpful comments.

\section{Picard-Vessiot theory and preliminaries} \label{sec:PV}

We start by recalling the basic definitions from Picard-Vessiot theory, the Galois theory of linear difference equations. The standard reference for this is \cite{SingerPut:difference}.
We will also use some standard notations from difference algebra, see \cite{Cohn:difference} or \cite{Levin:difference}.

\bigskip

All rings are assumed to be commutative. We set $\N:=\{0,1,2\ldots\}$. A difference ring, or $\s$-ring for short, is a ring $R$ equipped with a ring endomorphism $\s\colon R\to R$. A morphism of difference rings is a morphism of rings that commutes with $\s$. A $\s$-field is a $\s$-ring whose underlying ring is a field. The difference field we shall be most interested in is the field $K=k(z)$ of rational functions in the variable $z$ over a field $k$ of characteristic zero, equipped with the automorphism $\s$ defined by
$\s(h(z))=h(z+1)$ for $h\in k(z)$. For a difference ring $R$ the constants of $R$ are $R^\s:=\{r\in R|\ \s(r)=r\}$. Note that in our example, $K^\s=k$.

\medskip

Instead of a linear difference equation of order $n$, as in the introduction, we shall consider the slightly more general situation of a square first order system
$\s(y)=Ay$ where $A\in\Gl_n(K)$ and $y$ is a vector of length $n$.

\begin{defi}
Let $K$ be a $\s$-field and $A\in\Gl_n(K)$. A $K$-$\s$-algebra is called a \emph{Picard-Vessiot ring} for $\s(y)=Ay$ if
\begin{enumerate}
 \item $R$ is generated by a fundamental solution matrix for $\s(y)=Ay$, i.e., there exists $Y\in\Gl_n(R)$
 such that $\s(Y)=AY$ and $R=K[Y_{ij},\tfrac{1}{\det(Y)}]$.
\item $R$ is $\s$-simple, i.e., $R$ has no non-trivial $\s$-ideals.
\end{enumerate}
\end{defi}

It is a characteristic feature of the Galois theory of linear difference equations that the Picard-Vessiot ring is usually not an integral domain, contrary to the case of linear differential equations (see e.g. \cite{SingerPut:differential}.) However, like any Noetherian $\s$-simple $\s$-ring $R$, a Picard-Vessiot ring has a certain simple algebraic structure (see \cite[Cor. 1.16, p. 12]{SingerPut:difference} or \cite[Prop. 1.1.2, p. 2]{Wibmer:thesis}). Namely, there exist orthogonal idempotents $e_0,\ldots,e_{l-1}\in R$ such that
\begin{enumerate}
\item $R=e_0R\oplus\cdots\oplus e_{l-1}R$,
\item $\s(e_0)=e_1,\s(e_1)=e_2,\ldots,\s(e_{l-1})=e_0$,
\item the ring $e_iR$ is $\s^l$-simple and an integral domain for $i=0,\ldots,l-1$.
\end{enumerate}
We will call the integer $l$ the \emph{period} of $R$. It is precisely this simple algebraic structure of the Picard-Vessiot ring which is at the core of the connection between Picard-Vessiot theory and Problem SML. In fact we will see (Proposition \ref{prop:period}) that in many cases the period of the Picard-Vessiot ring is a bound for the period of the arithmetic progressions appearing in Problem SML.

It is sometimes more convenient or even necessary (e.g. for the Galois correspondence) to work with the total ring of fractions of the Picard-Vessiot ring rather than with the Picard-Vessiot ring itself. If $L$ is the total ring of fractions of a Noetherian $\s$-simple $\s$-ring then there exist orthogonal idempotents $e_0,\ldots,e_{l-1}\in L$ such that
\begin{enumerate}
\item $L=e_0L\oplus\cdots\oplus e_{l-1}L$,
\item $\s(e_0)=e_1,\s(e_1)=e_2,\ldots,\s(e_{l-1})=e_0$,
\item $e_iL$ is a field for $i=0,\ldots,l-1$.
\end{enumerate}

Conversely, if $L$ is a $\s$-ring satisfying the above three properties then $L$ is a Noetherian $\s$-simple $\s$-ring such that every non-zero divisor is invertible. We will call such $\s$-rings \emph{$\s$-pseudo fields}.

%

\begin{defi} \label{defi:PVextension}
Let $K$ be a $\s$-field and $A\in\Gl_n(K)$. A $\s$-pseudo field extension $L$ of $K$ is called a \emph{Picard-Vessiot extension} for $\s(y)=Ay$, if
\begin{enumerate}
\item $L$ is generated by a fundamental solution matrix for $\s(y)=Ay$, i.e., there exists $Y\in\Gl_n(L)$ such that $\s(Y)=AY$ and $L=K(Y_{ij})$. (To be precise, this notation means that $L$ is the total ring of fractions of $K[Y_{ij}]$.)
\item  $L^\s=K^\s$.
\end{enumerate}
\end{defi}

It is easy to see that a Picard-Vessiot ring always exists for a given equation $\s(y)=Ay,\ A\in\Gl_n(K)$:
Consider the universal solution ring $U=K[Z_{ij},\frac{1}{\det{Z}}]$ where $Z$ is an $n\times n$-matrix of indeterminates. We consider $U$ as $K$-$\s$-algebra by setting $\s(Z)=AZ$.
Then any quotient of $U$ modulo a maximal element in the set of all $\s$-ideals of $U$ is a Picard-Vessiot ring.

Under the assumption that $\s\colon K\to K$ is surjective and $K^\s$ is algebraically closed it is shown in \cite{SingerPut:difference} that a Picard-Vessiot ring for a given linear system $\s(y)=Ay$ is unique up to $K$-$\s$-isomorphisms. Moreover, the total ring of fractions of a Picard-Vessiot ring is a Picard-Vessiot extension. Under the additional hypothesis that $K$ is perfect it is also shown that $K[Y_{ij},\frac{1}{\det(Y)}]\subset L$ is a Picard-Vessiot ring for $\s(y)=Ay$ if $L$ is a Picard-Vessiot extension for $\s(y)=Ay$ with fundamental solution matrix $Y\in \Gl_n(L)$.

If $K^\s$ is not algebraically closed, a Picard-Vessiot extension for $\s(y)=Ay$ might or might not exist. However, if $K=k(z)$ with $\s(z)=z+1$ then one can show that there always exists a Picard-Vessiot extension (Proposition \ref{prop:existence}).

\bigskip

Let $K$ be a $\s$-field and $R$ a $K$-$\s$-algebra. An element $f\in R$ is called $\s$-finite (over $K$) if $f$ satisfies a linear $\s$-equation over $K$. This is equivalent to saying that $f$ is contained in a finite dimensional $K$-subvector space of $R$ which is stable under $\s$. For later reference we note the following simple fact:
\begin{lemma}\label{lemma:sfinite}
Let $R$ be a Picard-Vessiot ring for some equation $\s(y)=Ay$ over some $\s$-field $K$. Then every element of $R$ is $\s$-finite over $K$.
\end{lemma}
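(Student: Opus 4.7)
The plan is to show that the set of $\sigma$-finite elements of $R$ forms a $K$-subalgebra stable under $\sigma$, and then check that the standard generators of $R$ as a $K$-algebra are $\sigma$-finite. Since $R = K[Y_{ij}, 1/\det(Y)]$, this will force every element of $R$ to be $\sigma$-finite.

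First I would verify that $\sigma$-finiteness is preserved under the ring operations. Using the characterization recalled just before the lemma, $f \in R$ is $\sigma$-finite over $K$ if and only if $f$ lies in a finite-dimensional $K$-subspace $V \subseteq R$ with $\sigma(V) \subseteq V$. If $f_1 \in V_1$ and $f_2 \in V_2$ are two such elements, then $V_1 + V_2$ and the $K$-linear span $V_1 \cdot V_2$ of all products $v_1 v_2$ are again finite-dimensional $K$-subspaces stable under $\sigma$, and they contain $f_1 + f_2$ and $f_1 f_2$ respectively. Moreover, scalar multiples of $f_1$ lie in $V_1$. Hence the $\sigma$-finite elements form a $K$-subalgebra of $R$ that is closed under $\sigma$.

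Next I would exhibit finite-dimensional $\sigma$-stable $K$-subspaces containing the algebra generators. For the entries of $Y$, the relation $\sigma(Y) = AY$ says $\sigma(Y_{ij}) = \sum_{k} A_{ik} Y_{kj}$, so for each fixed column $j$ the $K$-span $V_j := \sum_k K Y_{kj}$ is stable under $\sigma$ and of dimension at most $n$. Hence every $Y_{ij}$ is $\sigma$-finite. For $1/\det(Y)$, expanding $\sigma(\det(Y)) = \det(\sigma(Y)) = \det(AY) = \det(A)\det(Y)$ with $\det(A) \in K^\times$ yields $\sigma(1/\det(Y)) = \det(A)^{-1} \cdot 1/\det(Y)$, so the one-dimensional $K$-space $K \cdot 1/\det(Y)$ is $\sigma$-stable, and thus $1/\det(Y)$ is $\sigma$-finite (it even satisfies a first-order equation).

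Combining the two steps, the $K$-subalgebra of $\sigma$-finite elements contains all the $Y_{ij}$ and $1/\det(Y)$, so it equals $R$. There is no real obstacle here; the only thing to be a little careful about is confirming closure under multiplication via the span $V_1 \cdot V_2$, which is standard.
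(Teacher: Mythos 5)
Your proof is correct and follows essentially the same route as the paper: show the generators $Y_{ij}$ and $1/\det(Y)$ lie in finite-dimensional $\sigma$-stable $K$-subspaces, then use closure of $\sigma$-finiteness under sums and products. If anything, you are slightly more careful than the paper's one-line argument, which invokes the span of the $Y_{ij}$ and $\det(Y)$ where your explicit treatment of $\sigma(1/\det(Y))=\det(A)^{-1}/\det(Y)$ is what is really needed.
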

\begin{proof}
Let $Y\in\Gl_n(R)$ be a fundamental solution matrix for $\s(y)=Ay$. Then the finite dimensional $K$-subvector space of $R$ generated by all $Y_{ij}$ and $\det(Y)$ is stable under $\s$. The claim follows because sums and products of $\s$-finite elements are $\s$-finite.
\end{proof}

\bigskip

\bigskip

Let $k$ be a field of characteristic zero. We define the ring $\seq_k$ of $k$-valued sequences as in \cite[Example 1.3, p.4]{SingerPut:difference}: This means that two sequences $f,g\colon \N\to k$ are identified if $f(i)=g(i)$ for all $i\gg 0$. Addition and multiplication is componentwise. We shall be rather careless about the distinction between an element $f\in k^\N$ and its equivalence class in $\seq_k$. In particular, the equivalence class of $f$ will often be denoted by $(f(0),f(1),\ldots)$.
We consider $\seq_k$ as $\s$-ring by shifting to the left, that is
\[\s(f)=(f(1),f(2),\ldots).\]

For $f\in\seq_k$ the set $\{i\in \N|\ f(i)=0\}$ is strictly speaking not well-defined. However, if we identify two subsets $M_1, M_2$ of $\N$ if the difference $M_1\smallsetminus M_2$ (or equivalently $M_2\smallsetminus M_1$) is finite, then $\{i\in \N|\ f(i)=0\}$ yields a well defined equivalence class. In the sequel we will usually consider subsets of $\N$ modulo this equivalence relation.

By an \emph{arithmetic progression} we mean a subset of $\N$ of the form $j+\N l$ with $j,l\in\N$. In particular, the one element set $\{j\}$ is considered to be an arithmetic progression. Therefore the property of a subset of $\N$ to be a finite union of arithmetic progressions passes down to equivalence classes. The integer $l$ is called the \emph{period} of the arithmetic progression $j+\N l$. 

One advantage of the ring $\seq_k$ compared to the ring $k^\N$ is that we can consider $k(z)$ as a subring of $\seq_k$. In fact, the map
\[k(z)\to\seq_k,\ h\mapsto (h(0),h(1),h(2),\ldots)\] is a morphism of difference rings, i.e., $\seq_k$ is a $k(z)$-$\s$-algebra. (The expression $(h(0),h(1),h(2),\ldots)$ is well-defined because a rational function has only finitely many poles.)

For a given linear system $\s(y)=Ay$ with $A\in\Gl_n(K)$ where $K=k(z),\ \s(z)=z+1$ it seems quite natural to ask whether or not there exists a Picard-Vessiot extension for $\s(y)=Ay$ inside $\seq_k$. The fact that $(\seq_k)^\s=k=K^\s$ seems to speak in favor of an affirmative answer. In \cite[Prop. 4.1, p. 45]{SingerPut:difference} it is shown, under the assumption that $k$ is algebraically closed, that there always exists a Picard-Vessiot ring inside $\seq_k$. This is not sufficient to deduce the existence of a Picard-Vessiot extension inside $\seq_k$ because a non-zero divisor in the Picard-Vessiot ring could be a zero divisor in $\seq_k$.

The following proposition removes the restriction of $k$ being algebraically closed.

\begin{prop} \label{prop:existence}
Let $k$ be a field of characteristic zero and $A\in\Gl_n(k(z))$. Then there exists a Picard-Vessiot ring for $\s(y)=Ay$ inside $\seq_k$. Moreover there exists a Picard-Vessiot extension for $\s(y)=Ay$.
\end{prop}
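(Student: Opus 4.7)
The plan is to reduce both assertions to the algebraically closed case handled in \cite{SingerPut:difference}, using faithfully flat descent for the ring statement and a Galois descent argument for the extension statement. Fix an algebraic closure $\bar k$ of $k$.

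For the first assertion I would start from a Picard-Vessiot ring $\bar R\subset\seq_{\bar k}$ for $\s(y)=Ay$ over $\bar k(z)$, supplied by \cite[Prop.\ 4.1, p.\ 45]{SingerPut:difference}, with some fundamental solution matrix $\bar Y\in\Gl_n(\seq_{\bar k})$. Choose an integer $N$ large enough that $A$ and $A^{-1}$ are defined and take values in $\Gl_n(k)$ at every $i\geq N$, and renormalize $\bar Y$ by right-multiplication with the constant invertible matrix $\bar Y(N)^{-1}\in\Gl_n(\bar k)$; this does not change the ring $\bar R$. After the normalization $\bar Y(N)=I$ the recursion $\bar Y(i+1)=A(i)\bar Y(i)$ keeps $\bar Y(i)\in\Gl_n(k)$ for all $i\geq N$, so $\bar Y\in\Gl_n(\seq_k)$. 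Set $R:=k(z)[\bar Y_{ij},1/\det(\bar Y)]\subset\seq_k$. Since $k(z)\otimes_k\bar k=\bar k(z)$ and the generators coincide, $R\otimes_k\bar k=\bar R$. Any non-trivial $\s$-ideal $\ida\subset R$ would, by faithful flatness of $\bar k/k$, yield the non-trivial $\s$-ideal $\ida\otimes_k\bar k$ of $\bar R$, contradicting the $\s$-simplicity of $\bar R$. Hence $R$ is a Picard-Vessiot ring inside $\seq_k$.

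For the second assertion let $L$ be the total ring of fractions of $R$. Since $R$ is Noetherian (it is finitely generated over $k(z)$) and $\s$-simple, $L$ is a $\s$-pseudo field by the structure theorem recalled in Section \ref{sec:PV}, and $L=k(z)(\bar Y_{ij})$. The only condition left to verify is $L^\s=k$.

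The main obstacle is that passing from $R$ to $L$ could a priori introduce new constants (one has $R^\s=k$ immediately from $\seq_k^{\,\s}=k$, but this does not obviously survive localization). I would handle this by embedding $L$ into $\bar L$, the total ring of fractions of $\bar R$: flatness of $\bar k/k$ sends non-zero divisors to non-zero divisors, so $R\hookrightarrow\bar R$ localizes to an injection $L\hookrightarrow\bar L$. Since $\bar k$ is algebraically closed, $\bar L$ is a Picard-Vessiot extension of $\bar k(z)$ with $\bar L^{\,\s}=\bar k$, so $L^\s\subset L\cap\bar k$ inside $\bar L$. Now let $G:=\gal(\bar k/k)$; it acts on $\bar R=R\otimes_k\bar k$ through the second tensor factor, commutes with $\s$, and therefore extends to $\bar L$. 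Elements of $L$, viewed in $\bar L$, are quotients of elements of $R\otimes 1$ and are hence pointwise $G$-fixed, whereas in characteristic zero $\bar k^G=k$. Consequently $L\cap\bar k\subset\bar L^G\cap\bar k=k$, giving $L^\s=k=K^\s$. Thus $L$ satisfies both axioms of Definition \ref{defi:PVextension} and is a Picard-Vessiot extension.
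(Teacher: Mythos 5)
Your proof is correct, and its first half is essentially the paper's argument in a slightly different order: the paper constructs the fundamental matrix $Y$ directly in $\seq_k$ by setting $Y(i_0)=I$ and iterating $Y(i+1)=A(i)Y(i)$, then invokes \cite[Prop.\ 4.1, p.\ 45]{SingerPut:difference} to see that $\overline{k}(z)[Y_{ij},\tfrac{1}{\det(Y)}]$ is a Picard-Vessiot ring, whereas you start from the Picard-Vessiot ring over $\overline{k}(z)$ and renormalize its fundamental matrix; both arguments then descend $\s$-simplicity along $k\to\overline{k}$ via $R\otimes_k\overline{k}=\overline{R}$. (When you assert $R\otimes_k\overline{k}=\overline{R}$ you are tacitly using that $\seq_k\otimes_k\overline{k}\to\seq_{\overline{k}}$ is injective, which the paper makes explicit.) Where you genuinely diverge is in proving $L^\s=k$. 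The paper's argument is shorter and purely internal: for $g\in L^\s$ the denominator ideal $\{f\in R\mid fg\in R\}$ is a non-zero $\s$-ideal of $R$ (it is $\s$-stable precisely because $\s(g)=g$), hence contains $1$ by $\s$-simplicity, so $g\in R^\s=(\seq_k)^\s\cap R=k$. You instead embed $L$ into the total ring of fractions $\overline{L}$ of $\overline{R}$, quote $\overline{L}^\s=\overline{k}$ from the algebraically closed theory, and finish by Galois descent $\overline{k}^{\gal(\overline{k}/k)}=k$. Your route is sound (the injection $L\hookrightarrow\overline{L}$ and the extension of the Galois action to $\overline{L}$ both check out), but it needs the full strength of the Singer--van der Put result that $\overline{L}$ is a Picard-Vessiot extension, while the paper's denominator-ideal trick needs only the $\s$-simplicity of $R$ and the trivial observation $(\seq_k)^\s=k$; on the other hand, your argument does not use the embedding into sequences at this final stage and would apply verbatim to any Picard-Vessiot ring whose base change to $\overline{k}$ remains one.
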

\begin{proof}
It is easy to construct a fundamental solution matrix $Y$ for $\s(y)=Ay$ inside $\seq_k$: If we choose $i_0\in \N$ big enough then $A(i)$ is defined and $\det(A)(i)$ is non-zero for every $i\geq i_0$. Set $Y(i_0)$ to be the identity matrix and recursively define $Y(i+1)=A(i)Y(i)$ for $i\geq i_0$. Then $Y\in\Gl_n(\seq_k)$ and $\s(Y)=AY$.
We have to show that $R:=k(z)[Y_{ij},\tfrac{1}{\det(Y)}]\subset\seq_k$ is $\s$-simple.

Let $\overline{k}$ denote an algebraic closure of $k$. We know from \cite[Prop. 4.1, p. 45]{SingerPut:difference} that $\overline{R}:=\overline{k}(z)[Y_{ij},\tfrac{1}{\det(Y)}]$ is a Picard-Vessiot ring for $\s(y)=Ay$ over $\overline{k}(z)$. Since the natural map $\seq_k\otimes_k \overline{k}\to \seq_{\overline{k}}$ is injective also $R\otimes_k\overline{k}\to \overline{R}$ is injective. Therefore $\overline{R}=R\otimes_k\overline{k}$. A non-trivial $\s$-ideal $\ida$ of $R$ would give rise to a non-trivial $\s$-ideal $\ida\otimes_k\overline{k}$ of $\overline{R}$. Consequently $R$ must be $\s$-simple, i.e., $R$ is a Picard-Vessiot ring for $\s(y)=Ay$.

The total quotient ring $L$ of $R$ is a $\s$-pseudo field, generated by a fundamental solution matrix for $\s(y)=Ay$. It remains to see that $L^\s=k$. So let $g\in L^\s$. The set
\mbox{$\{f\in R|\ fg\in R\}$} is a non-zero $\s$-ideal of $R$. Because $R$ is $\s$-simple this ideal must contain $1$, i.e., $g\in R^\s$. But $R^\s=k$ because $(\seq_k)^\s=k$.
\end{proof}

A Picard-Vessiot ring or a Picard-Vessiot extension for a linear equation $\s(y)=Ay$ over $K=k(z)$ need not be unique up to $K$-$\s$-isomorphisms if $k=K^\s$ is not algebraically closed. However, inside $\seq_k$ there only exists one Picard-Vessiot ring:

\begin{rem} \label{rem:PVringunique}
Let $k$ be a field of characteristic zero and $A\in\Gl_n(k(z))$. The Picard-Vessiot ring $R\subset\seq_k$ for $\s(y)=Ay$ is set theoretically unique. In fact, if $Y\in\Gl_n(\seq_k)$ is any fundamental solution matrix for $\s(y)=Ay$ then $R=k(z)[Y_{ij},\tfrac{1}{\det(Y)}]$.
\end{rem}
\begin{proof}
Let $Y'\in\Gl_n(R)\subset\Gl_n(\seq_k)$ be a fundamental solution matrix for $\s(y)=Ay$. Then $Y'=YC$ for some matrix $C\in\Gl_n((\seq_k)^\s)=\Gl_n(k)$. (Simply compute $\s(Y^{-1}Y')$.)
Therefore $R=k(z)[Y'_{ij},\tfrac{1}{\det(Y')}]=k(z)[Y_{ij},\tfrac{1}{\det(Y)}]$.
\end{proof}

\section{Dynamical Mordell-Lang and main results}

Let $\s\colon X\dashrightarrow X$ be a rational map on a variety $X$. Let $U$ be the largest open subset such that $\s$ is defined on $U$. For $x\in X$ we say that the orbit $\oo_\s(x)$ is defined if $x\in U,\s(x)\in U,\ldots$. The dynamical Mordell-Lang conjecture is a fundamental problem in algebraic dynamics:

\bigskip

\noindent {\bf Problem DML:} Let $X$ be a variety over a field of characteristic zero equipped with a rational map $\s\colon X\dashrightarrow X$. Let $x\in X$ be such that $\oo_\s(x)$ is defined and let $Y$ be a closed subvariety of $X$.

When is it true that the set $\{i\in\N|\ \s^i(x)\in Y\}$ is a finite union of arithmetic progressions?

\bigskip

Several special cases of the dynamical Mordell-Lang conjecture have already been established. We refer the reader to \cite{BellGhiocaTucker:DynamicalMordellLangProblemforEtaleMaps} and \cite{GhiocaTucker:PeriodicPointsLinearizingMapsandtheDynamicalMordellLangProblem} for an overview. The most relevant special case for us is due to J. Bell \cite{Bell:AGeneralizedSkolemMahlerLechTheorem} and solves the case when $X$ is affine and $\s\colon X\to X$ is an everywhere defined invertible morphism.

Now we are prepared to prove our main result:
\begin{theo} \label{theo:main}
Let $k$ be a field of characteristic zero and $A\in\Gl_n(k(z))$. Set $X=\A^1_k\times\Gl_{n,k}$ and define a rational map $\s\colon X\dashrightarrow X$ by \[\s(b,B)=(b+1, A(b)B).\]
Then the following statements are equivalent:
\begin{enumerate}
\item For every $x=(b,B)\in X(k)$ with $b\in\N$ and $\oo_\s(x)$ defined and every closed subvariety $Y\subset X$, the set $\{i\in\N|\  \s^i(x)\in Y\}$ is a finite union of arithmetic progressions.
\item There exists a point $x=(b,B)\in X(k)$ with $b\in\N$ and $\oo_\s(x)$ defined such that for every closed subvariety $Y\subset X$ the set $\{i\in\N|\  \s^i(x)\in Y\}$ is a finite union of arithmetic progressions.
\item There exists a Picard-Vessiot extension of $k(z)$ for $\s(y)=Ay$ inside $\seq_k$.
\end{enumerate}
\end{theo}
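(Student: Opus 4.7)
I prove the cycle (iii) $\Rightarrow$ (i) $\Rightarrow$ (ii) $\Rightarrow$ (iii). The implication (i) $\Rightarrow$ (ii) is immediate once a single admissible starting point is produced: take $x_0=(b_0,Y(b_0))$ with $Y\in\Gl_n(\seq_k)$ the fundamental solution matrix from Proposition~\ref{prop:existence} and $b_0\in\N$ large enough that the orbit is defined. For (iii) $\Rightarrow$ (i), let $L\subset\seq_k$ be a Picard-Vessiot extension and fix any $x_0=(b_0,B_0)$ with $\oo_\s(x_0)$ defined. The matrix part $M\in\Gl_n(\seq_k)$ of the orbit satisfies $\s(M)=AM$, and $\s(Y^{-1}M)=Y^{-1}M$ gives $M=YC$ for some $C\in\Gl_n(k)$; by Remark~\ref{rem:PVringunique} the ring $R:=k(z)[M_{ij},\tfrac{1}{\det M}]$ is the Picard-Vessiot ring inside $\seq_k$ and $L$ is its total quotient ring. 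A closed subvariety $W\subset X$ cut out by polynomials $P_1,\ldots,P_r$ has hit set equal, up to a finite set, to the intersection of the zero sets in $\seq_k$ of the elements $P_\alpha(z,M)\in R$. Using $L=\bigoplus_{j=0}^{l-1}e_jL$ with each $e_jL$ a field, the idempotents $e_j$ are characteristic functions of a partition of $\N$ into arithmetic progressions of period $l$; each component $e_jf$ of an element $f\in R$ is either zero in $L$ (and hence in $\seq_k$) or a unit in $e_jL$, in which case its inverse in $\seq_k$ forces $e_jf$ to have only finitely many zeros on $\supp(e_j)$. Thus each $\Z(P_\alpha(z,M))$, and the hit set of $W$, is a finite union of arithmetic progressions.

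For (ii) $\Rightarrow$ (iii), the hard direction, I would proceed geometrically. Let $x_0=(b_0,B_0)$ satisfy (ii). Applying (ii) iteratively --- first to the irreducible components of the Zariski closure of the orbit, then to subvarieties arising at each successive refinement --- I would produce an integer $N\geq 1$ such that for every $r\in\{0,\ldots,N-1\}$ the Zariski closure $Z_r\subset X$ of the sub-orbit $\{\s^{r+Nj}(x_0):j\in\N\}$ is irreducible and every proper closed subvariety $V\subsetneq Z_r$ meets this sub-orbit only finitely often. The refinement step is: if some $V\subsetneq Z_r$ meets the sub-orbit infinitely often, (ii) exhibits an arithmetic subprogression of $r+N\N$ in the hit set of $V$; we enlarge $N$ to a common multiple of the new periods, recompute the $Z_r$'s, and iterate, with Noetherianity of $X$ guaranteeing termination. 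Once $N$ is stable, every non-zero $\varphi\in k(Z_r)$ has only finitely many zeros along the $r$-th sub-orbit.

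Now define $\tilde e_r\in\seq_k$ as the characteristic function of $b_0+r+N\N$ (mod finite), let $\tilde L_r\subset\tilde e_r\seq_k$ be the image of $k(Z_r)$ under the evaluation $\varphi\mapsto(\varphi(\s^{i-b_0}(x_0)))_{i\equiv b_0+r\bmod N}$, and put $\tilde L:=\bigoplus_{r=0}^{N-1}\tilde L_r\subset\seq_k$. The genericity property makes each evaluation an embedding of fields (non-zero rational functions on $Z_r$ have finite zero set along the sub-orbit and hence are units in $\tilde e_r\seq_k$), so each $\tilde L_r$ is a field. The rational map $\s\colon X\dashrightarrow X$ restricts to a dominant rational map $Z_{r-1}\dashrightarrow Z_r$ (it sends the $(r{-}1)$-th sub-orbit, dense in $Z_{r-1}$, to the $r$-th sub-orbit, dense in $Z_r$), which induces $\s(\tilde L_r)\subset\tilde L_{r-1}$, while $\s(\tilde e_r)=\tilde e_{r-1\bmod N}$. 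Thus $\tilde L$ is a $\s$-pseudo field. The coordinate functions $B_{ij}$ and $\tfrac{1}{\det B}$ on $X$ restrict to elements of each $k(Z_r)$, so $M\in\Gl_n(\tilde L)$ furnishes a fundamental solution matrix for $\s(y)=Ay$, whence $\tilde L$ contains and is generated as a pseudo field by $k(z)[M_{ij},\tfrac{1}{\det M}]$. Finally $\tilde L^\s\subset(\seq_k)^\s=k$, so $\tilde L$ is the desired Picard-Vessiot extension.

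The main obstacle is the construction of the uniform integer $N$: a single $N$ must simultaneously separate all irreducible components of the orbit closure and render each sub-orbit generic in its closure $Z_r$ in the sense above. The iterative refinement is plausible --- each step replaces $Z_r$ by a proper closed subvariety, and $X$ is Noetherian --- but requires careful bookkeeping to verify that only finitely many new subvarieties are ever introduced and that the successive enlargements of $N$ stabilize. Verifying compatibility of $\s$ with the sub-orbit dynamics (so that $\s$ really descends to a dominant map $Z_{r-1}\dashrightarrow Z_r$) is the other delicate point, handled by choosing $b_0$ large enough that the entire orbit lies in the locus where $\s$ is defined.
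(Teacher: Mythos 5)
Your directions (iii)$\Rightarrow$(i) and (i)$\Rightarrow$(ii) follow the paper's argument essentially verbatim: evaluate along the orbit, identify $k(z)[M_{ij},\tfrac{1}{\det M}]$ with the unique Picard-Vessiot ring $R\subset\seq_k$ via Proposition~\ref{prop:existence} and Remark~\ref{rem:PVringunique}, and use the idempotent decomposition of $L$ (whose idempotents are indicator functions of residue classes) to see that zero sets of elements of $R$ are finite unions of arithmetic progressions. These parts are correct.

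The direction (ii)$\Rightarrow$(iii) is where you diverge, and where the gaps lie. The paper's route is far shorter: Proposition~\ref{prop:existence} already provides the Picard-Vessiot ring $R=\psi(U)\subset\seq_k$, so one only needs that every non-zero divisor of $R$ is a unit in $\seq_k$; if $\psi(f)$ had infinitely many zeros, (ii) applied to the single hypersurface $\V(f)$ yields an infinite progression $j+\N l$ of zeros, whence $\psi(f)\s(\psi(f))\cdots\s^{l-1}(\psi(f))=0$, contradicting non-zero-divisibility. Your from-scratch construction of $\tilde L=\bigoplus_r k(Z_r)$ has two genuine problems. First, the uniform $N$ is precisely the hard point and you leave it unproved; it can be obtained by Noetherian minimality (inside each progression choose a sub-progression whose sub-orbit closure is minimal among closures along all infinite sub-progressions; (ii) then forces genericity, and genericity is inherited by further sub-progressions), but your iterative lcm procedure as written does not obviously terminate, since each refinement multiplies the number of residue classes to be re-examined. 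Second, and more seriously: even granting $N$, your $\tilde L$ is a Picard-Vessiot extension only if it equals the total ring of fractions of $R=k(z)[M_{ij},\tfrac{1}{\det M}]$, as Definition~\ref{defi:PVextension}(i) demands $L=K(Y_{ij})$. If $N$ is a proper multiple of the period $l$ of $R$ --- which happens exactly when distinct residues $r$ give the same $Z_r$, e.g.\ when the full orbit is already generic in an irreducible closure, so that $R$ is a domain and its total ring of fractions is a field, while your $\tilde L$ with $N=2$ would be $k(Z)\oplus k(Z)$ --- then $\tilde L$ contains idempotents not lying in the total ring of fractions of $R$ and is not a Picard-Vessiot extension. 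You must either show the minimal admissible $N$ equals $l$, or, more simply, use your genericity statement only to conclude that non-zero divisors of $R$ become units in $\seq_k$ and then take the total ring of fractions of $R$ inside $\seq_k$; the latter collapses your argument onto the paper's.
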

\begin{proof}
We start by showing that (iii) implies $(\rm{i})$:  Let $x=(b,B)\in X(k)$ be such that $b\in\N$ and $\oo_\s(x)$ is defined. We can define a map $\psi$ from $k[X]=k[z,Z_{ij},\tfrac{1}{\det(Z)}]$ to $\seq_k$ by setting
\[\psi(f):=\left(*,\ldots,*,f(x),f(\s(x)),f(\s^2(x)),\ldots\right)\]
where $f(x)$ is on position $b\in\N$. Then $\psi$ is a morphism of $k[z]$-algebras and extends to a morphism $\psi\colon k(z)[Z_{ij},\tfrac{1}{\det(Z)}]\to \seq_k$ of $k(z)$-algebras. As in the remark after Definition~\ref{defi:PVextension} we consider $U=k(z)[Z_{ij},\tfrac{1}{\det(Z)}]$ as $k(z)$-$\s$-algebra by virtue of $\s(Z)=AZ$. As $\s(x)=(b+1,A(b)B)$, $\s^2(x)=(b+2,A(b+1)A(b)B),\ldots$ it holds for $f=f(z,Z)\in U$ that
\begin{align*}
\psi(\s(f))&=\psi(f(z+1,A(z)Z))=\left(*,\ldots,*,f(b+1,A(b)B),f(b+2,A(b+1)A(b)B),\ldots\right) \\
 &=(*,\ldots,*,f(\s(x)),f(\s^2(x)),\ldots)=\s(\psi(f)).
\end{align*}
So $\psi\colon U\to\seq_k$ is a morphism of $k(z)$-$\s$-algebras.

By Proposition \ref{prop:existence} there exists a Picard-Vessiot ring $R\subset\seq_k$ for $\s(y)=Ay$.
Because $\psi(Z)\in\Gl_n(\seq_k)$ is a fundamental solution matrix for $\s(y)=Ay$ it follows from Remark \ref{rem:PVringunique} that $\psi(U)=R$. As explained in Section \ref{sec:PV} the ring $R$ is of the form $R=e_0R\oplus\cdots\oplus e_{l-1}R$ where
\begin{enumerate}
\item $e_j^2=e_j$ for $j=0,\ldots,l-1$,
\item $e_je_k=0$ for $j\neq k$,
\item $\s(e_0)=e_1,\s(e_1)=e_2,\ldots,\s(e_{l-1})=e_0$
\end{enumerate}
and $e_jR$ is an integral domain for $j=0,\ldots,l-1$.
The unique set ${e_0,\ldots,e_{l-1}}$ of elements of $\seq_k$ satisfying properties (i), (ii) and (iii) consists of the indicator functions
of the arithmetic progressions $j+\N l$, $j=0,\ldots,l-1$. So after possibly renumbering the $e_j$'s we can assume that
$e_j$ is the indicator function of $j+\N l$ for $j=0,\ldots,l-1$.

By assumption there exists a Picard-Vessiot extension $L|k(z)$ for $\s(y)=Ay$ inside $\seq_k$. Let $Y\in\Gl_n(L)$ be a fundamental solution matrix for $\s(y)=Ay$.
Then it follows again from Remark~\ref{rem:PVringunique} that $R=k(z)[Y_{ij},\tfrac{1}{\det(Y)}]\subset L$. So $L=k(z)(Y_{ij})$ is the total ring of fractions of $R$.

Let $g\in R$. Clearly, every non-zero element of $e_jR$ is invertible in $e_jL$. Because
$e_jL$ lives inside $\seq_k$ this implies that either $e_jg$ is zero or $e_jg$ is (eventually) non-zero on every element of $j+\N l$. In summary this shows that for every element
$g=e_0g+\cdots+e_{l-1}g\in R$ the set $\{i\in\N|\ g(i)=0\}$ is a finite union of arithmetic progressions of period $l$.


We have to show that $\{i\in\N|\ \s^i(x)\in Y\}$ is a finite union of arithmetic progressions for every closed subvariety $Y\subset X$.
The case when $Y=\V(f)$ ($f\in k[X]$) is a hypersurface follows from the above result because $\psi(f)\in R$ and
\[\{i\in\N|\ \s^i(x)\in Y\}=\{i\in\N|\ f(\s^i(x))=0\}=\{i\in\N|\ \psi(f)(i+b)=0\}.\]
The general case follows from the hypersurface case because every closed subvariety is a finite intersection of hypersurfaces and the intersection of two finite unions of arithmetic progressions is again a finite union of arithmetic progressions. This finishes the proof that (iii) implies (i).

\medskip

The implication (i) $\Rightarrow$ (ii) is immediate because the set of $b\in\N$ which are zeros of a denominator appearing in an entry of $A$ is finite.

So it only remains to prove that (ii) implies (iii). Let $x=(b,B)\in X(k)$ be such that $b\in\N$ and $\oo_\s(x)$ is defined. As in the first part of the proof we obtain a $k(z)$-$\s$-morphism $\psi\colon U\to \seq_k$. We know from Proposition \ref{prop:existence} that there exists a Picard-Vessiot ring for $\s(y)=Ay$ inside $\seq_k$ and it follows from Remark \ref{rem:PVringunique} that $\psi(U)$ is that Picard-Vessiot ring. To construct a Picard-Vessiot extension inside $\seq_k$ it suffices to show that every non-zero divisor of $\psi(U)$ is a non-zero divisor (i.e., a unit) in $\seq_k$. So let $f'\in U$ be such that $\psi(f')$ is a non-zero divisor in $\psi(U)$. Suppose for a contradiction that $\psi(f')$ is a zero divisor in $\seq_k$.

There exists a $p\in k[z]\smallsetminus\{0\}$ such that $f=pf'$ lies in $k[z,Z_{ij},\tfrac{1}{\det(Z)}]=k[X]$. Because $\psi(f')$ is a non-zero divisor in $\psi(U)$ and $\psi(p)$ is a unit in $\psi(U)$, $\psi(f)$ is a non-zero divisor in $\psi(U)$. Because $\psi(f')$ is a zero divisor in $\seq_k$, $\psi(f)$ is a zero divisor in $\seq_k$.
This means that $\psi(f)$ assumes the value zero an infinite number of times.

By assumption the set $\{i\in\N|\ \s^i(x)\in\V(f)\}=\{i\in \N |\ \psi(f)(i+b)=0\}$ is a finite union of arithmetic progressions. Thus there exists an infinite arithmetic
progression $j+\N l$ with $l\geq 2$ such that $\psi(f)$ vanishes on $j+\N l$. But then $\psi(f)\s(\psi(f))\cdots\s^{l-1}(\psi(f))=0$. Replacing $l$ with a smaller integer if necessary, we can assume that $\s(\psi(f))\cdots\s^{l-1}(\psi(f))\in\psi(U)$ is non-zero. This contradicts the fact that $\psi(f)$ is a non-zero divisor in $\psi(U)$.
\end{proof}

%

Let $k$ be a field of characteristic zero and consider a linear difference equation
\[E=\s^n(y)+h_{n-1}\s^{n-1}(y)+\cdots+h_0y=0\] over $K=k(z)$ where $\s(z)=z+1$. If $h_0=0$ then $E$ is equivalent to a linear difference equation of order strictly smaller than $n$. So we can assume without loss of generality that $h_0$ is non-zero. The equation $E$ is equivalent to
an $n\times n$ first order system $\s(y)=A(E)y$ where
\[A(E)=\left(
\begin{array}{ccccc}
0 & 1 & 0 & \cdots & 0 \\
0 & 0 & 1 & \cdots & 0 \\
\vdots & \vdots & \vdots & \vdots & \vdots \\
0 & 0 & \cdots & 0 & 1 \\
-h_0 & -h_1 & \cdots & -h_{n-2} & -h_{n-1}
\end{array}
\right).\]
An element $f$ in some $K$-$\s$-algebra is a solution of $E$ if and only if
\[\left(\begin{array}{c}
f \\
\s(f) \\
\vdots \\
\s^{n-1}(f)
  \end{array}\right)\]
is a solution of $\s(y)=A(E)y$. Note that $A(E)\in\Gl_n(K)$ because $h_0\neq 0$.

\begin{cor}\label{cor:SML1}
Let $k$ be a field of characteristic zero and
\[E=\s^n(y)+h_{n-1}\s^{n-1}(y)+\cdots+h_0y=0\] a linear difference equation over $K=k(z)$ with $h_0\neq 0$. If the equivalent conditions of Theorem \ref{theo:main} are satisfied for $A=A(E)$ then for every solution $f\in k^\N$ of $E$, the set $\{i\in\N|\ f(i)=0\}$ is a finite union of arithmetic progressions.
\end{cor}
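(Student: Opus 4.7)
The plan is to show that the sequence $f$, viewed as an element of $\seq_k$, actually lies in the Picard-Vessiot ring $R \subset \seq_k$ produced by Proposition~\ref{prop:existence}, and then to invoke the zero-set analysis already carried out during the proof of Theorem~\ref{theo:main}.

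First I would identify the given solution $f \in k^{\N}$ with its class in $\seq_k$ and form the column vector
\[
v = \bigl(f,\ \s(f),\ \ldots,\ \s^{n-1}(f)\bigr)^{T} \in \seq_k^{\,n}.
\]
Because $f$ satisfies $E$ for $i \gg 0$, the very definition of $A(E)$ forces $\s(v) = A(E)v$ in $\seq_k$. Next, using Proposition~\ref{prop:existence}, I would fix a Picard-Vessiot ring $R \subset \seq_k$ with fundamental solution matrix $Y \in \Gl_n(\seq_k)$. The key short computation
\[
\s(Y^{-1}v) = \s(Y)^{-1}\s(v) = (AY)^{-1}\,A\,v = Y^{-1}v
\]
shows that the entries of $Y^{-1}v$ lie in $(\seq_k)^{\s} = k$. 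Hence $v = Yc$ for some $c \in k^{n}$, and in particular $f = \sum_{j=1}^{n} c_j Y_{1j}$ belongs to $R$.

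Now I would invoke the hypothesis that a Picard-Vessiot extension $L \subset \seq_k$ for $\s(y) = A(E)y$ exists. By Remark~\ref{rem:PVringunique}, $R$ is the unique Picard-Vessiot ring inside $\seq_k$, and since any fundamental matrix of $L$ also lives in $\Gl_n(\seq_k)$, we have $R \subset L$, with the field components $e_j L$ of $L$ sitting inside $\seq_k$. Exactly as in the proof of the implication (iii)$\Rightarrow$(i) in Theorem~\ref{theo:main}, the idempotents $e_0, \ldots, e_{l-1}$ of $R$ are forced to be the indicator functions of the arithmetic progressions $j + \N l$, and for any $g \in R$ the component $e_j g \in e_j R \subset e_j L$ is either zero or eventually nonzero on $j + \N l$; hence $\{i \in \N \mid g(i) = 0\}$ is a finite union of arithmetic progressions. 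Applying this to $g = f$ gives the corollary.

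The main (and really only) obstacle is the mild one of locating $f$ inside $R$ via the constants argument above; once $f \in R$ is in hand, the statement is essentially a repackaging of the zero-set description of elements of $R$ already established in the proof of Theorem~\ref{theo:main}.
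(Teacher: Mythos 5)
Your proposal is correct and follows the paper's own proof: the paper likewise reduces to showing $f\in R$ via the fact that $\bigl(f,\s(f),\ldots,\s^{n-1}(f)\bigr)^{T}$ is a $k$-linear combination of the columns of a fundamental solution matrix, and then cites the zero-set description of elements of $R$ established in the proof of Theorem \ref{theo:main}. You merely spell out the $\s(Y^{-1}v)=Y^{-1}v$ computation that the paper leaves implicit, which is a welcome but inessential elaboration.
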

\begin{proof}
Let $R\subset\seq_k$ denote the Picard-Vessiot ring for $\s(y)=A(E)y$. In the proof of Theorem \ref{theo:main} we have seen that for every element $g\in R$ the set $\{i\in\N |\ g(i)=0\}$ is a finite union of arithmetic progressions. Therefore it suffices to see that (the equivalence class of) $f$ lies $R$. But this follows from the simple algebraic fact that
\[\left(\begin{array}{c}
f \\
\s(f) \\
\vdots \\
\s^{n-1}(f)
  \end{array}\right)\]
must be a $k$-linear combination of the columns of a fundamental solution matrix $Y\in\Gl_n(R)$.
\end{proof}

The following result has recently been proved in \cite{Belletal:OnTheSetOfZerocoefficientsofafunctionSatisfying}. The methods and ideas used there (namely a $p$-adic analytic arc lemma and Strassman's theorem) are very similar to the methods used in \cite{Bell:AGeneralizedSkolemMahlerLechTheorem} to prove a special case of the dynamical Mordell-Lang conjecture (the case when $X$ is affine and $\s$ an everywhere defined automorphism).  Here we actually deduce the result of \cite{Belletal:OnTheSetOfZerocoefficientsofafunctionSatisfying} from \cite{Bell:AGeneralizedSkolemMahlerLechTheorem}.

\begin{cor}[{\cite[Theorem 1.2]{Belletal:OnTheSetOfZerocoefficientsofafunctionSatisfying}}] \label{cor:SML2}
Let $k$ be a field of characteristic zero and \[E=\s^n(y)+h_{n-1}(z)\s^{n-1}(y)+\cdots+h_0(z)y=0\] a linear difference equation over $k(z)$ such that
$h_{n-1},\ldots,h_1\in k[z]$ and $h_0\in k\smallsetminus\{0\}$. Then for every solution $f\in k^\N$ of $E$, the set $\{i\in\N| \ f(i)=0\}$ is a finite union of arithmetic progressions.
\end{cor}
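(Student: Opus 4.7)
By Corollary~\ref{cor:SML1} it is enough to verify any one of the equivalent conditions of Theorem~\ref{theo:main} for the companion matrix $A=A(E)\in\Gl_n(k(z))$. My plan is to verify condition (i) directly, by exhibiting $\sigma\colon X\dashrightarrow X$ as an everywhere defined automorphism of an affine variety and then invoking Bell's dynamical Mordell--Lang theorem from \cite{Bell:AGeneralizedSkolemMahlerLechTheorem}.

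The decisive observation is what the hypothesis $h_{n-1},\ldots,h_1\in k[z]$ and $h_0\in k\smallsetminus\{0\}$ buys us at the level of the companion matrix. All entries of $A(E)$ lie in $k[z]$, and $\det A(E)=\pm h_0$ is a \emph{non-zero constant}. Therefore $A(E)^{-1}$ also has entries in $k[z]$, computed from the classical adjugate formula divided by the constant $\pm h_0$. It follows that the rational map $\sigma(b,B)=(b+1,A(b)B)$ on $X=\A^1_k\times\Gl_{n,k}$ is actually a morphism defined on all of $X$, since $A(b)B$ is invertible whenever $B$ is (as $\det A(b)$ never vanishes), and the formula $(b,B)\mapsto(b-1,A(b-1)^{-1}B)$ provides an everywhere defined inverse. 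Thus $\sigma\colon X\to X$ is a regular automorphism of the affine variety $X$.

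With this geometric setup in place I would apply Bell's theorem: for an everywhere defined invertible morphism $\sigma\colon X\to X$ of an affine variety in characteristic zero, for every $k$-point $x$ and every closed subvariety $Y\subset X$, the set $\{i\in\N\mid\sigma^i(x)\in Y\}$ is a finite union of arithmetic progressions. This is precisely condition (i) of Theorem~\ref{theo:main} for $A=A(E)$. Corollary~\ref{cor:SML1} then yields the desired conclusion about the zero set of every $f\in k^\N$ solving $E$.

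The only subtle point I foresee is that Bell's theorem is typically stated over an algebraically closed field of characteristic zero, whereas here $k$ is arbitrary of characteristic zero. This is easy to finesse: base change to an algebraic closure $\bar{k}$ preserves the hypothesis (the companion matrix, its determinant, and the automorphism property of $\sigma$ are insensitive to the base field), and the set $\{i\in\N\mid f(i)=0\}$ for a $k$-valued sequence $f$ is unchanged when viewed inside $\seq_{\bar{k}}$. Thus the main obstacle is really just the recognition that the constancy of $h_0$ is exactly what is needed to upgrade $\sigma$ from a rational map to a regular automorphism, putting us squarely inside the range of Bell's theorem.
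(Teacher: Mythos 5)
Your proposal is correct and follows essentially the same route as the paper: both reduce to Corollary~\ref{cor:SML1} by observing that the hypotheses force $A(E)\in\Gl_n(k[z])$ with constant non-zero determinant, so that the map $\s$ of Theorem~\ref{theo:main} is an everywhere defined automorphism of the affine variety $X$, whence Bell's theorem gives condition (i). Your extra remarks spelling out the adjugate computation and the base change to $\bar{k}$ are sound elaborations of details the paper leaves implicit.
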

\begin{proof}
The assumptions imply that $A(E)\in\Gl_n(k[z])$ and that the rational map \mbox{$\s\colon X\dashrightarrow X$} of Theorem \ref{theo:main} is an everywhere defined automorphism. Thus the validity of (i) (or (ii)) of Theorem \ref{theo:main} follows from \cite{Bell:AGeneralizedSkolemMahlerLechTheorem} and we can conclude via Corollary \ref{cor:SML1}.
\end{proof}

\begin{cor}
Let $k$ be a field of characteristic zero and let $A\in k[z]^{n\times n}$ be such that $\det(A)\in k\smallsetminus\{0\}$. Then there exists a Picard-Vessiot extension for $\s(y)=Ay$ inside $\seq_k$.
\end{cor}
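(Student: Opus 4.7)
The plan is to deduce this corollary directly from Theorem~\ref{theo:main}, in the same spirit as Corollary~\ref{cor:SML2}, by verifying that the geometric hypothesis of Bell's special case of the dynamical Mordell--Lang conjecture is met.

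First I would form the variety $X=\A^1_k\times\Gl_{n,k}$ and the rational map $\s\colon X\dashrightarrow X$, $\s(b,B)=(b+1,A(b)B)$ attached to $A$ by Theorem~\ref{theo:main}. The crucial observation is that under the hypothesis $A\in k[z]^{n\times n}$ with $\det(A)\in k\smallsetminus\{0\}$, the matrix $A(b)$ makes sense for every $b\in k$ and lies in $\Gl_n(k)$, since its determinant is the nonzero constant $\det(A)$. In particular $\s$ is a morphism defined on all of $X$. Moreover $A^{-1}$ has entries in $k[z]$ as well (because the adjugate is polynomial and $\det(A)^{-1}\in k$), so the candidate inverse $(b,B)\mapsto (b-1,A(b-1)^{-1}B)$ is likewise everywhere defined; a direct computation shows it is a two-sided inverse of $\s$. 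Thus $\s$ is an everywhere defined automorphism of the affine variety $X$.

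Next I would invoke the special case of the dynamical Mordell--Lang conjecture proved in \cite{Bell:AGeneralizedSkolemMahlerLechTheorem}, which applies precisely to an affine variety equipped with an everywhere defined invertible morphism. This yields statement (i) of Theorem~\ref{theo:main}: for every $k$-rational point $x=(b,B)\in X(k)$ with $b\in\N$ (and any closed subvariety $Y\subset X$), the set $\{i\in\N\mid \s^i(x)\in Y\}$ is a finite union of arithmetic progressions.

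Finally, the equivalence (i)$\Leftrightarrow$(iii) of Theorem~\ref{theo:main} gives the conclusion that a Picard-Vessiot extension for $\s(y)=Ay$ exists inside $\seq_k$. There is no substantive obstacle; the only thing to verify carefully is that the constancy of $\det(A)$ (rather than merely $A$ having polynomial entries) is what forces $\s$ to be an \emph{automorphism} of $X$ and not just an endomorphism, and this is exactly the gap between the hypotheses of Corollary~\ref{cor:SML2} and what is needed to feed Bell's theorem into Theorem~\ref{theo:main}(ii).
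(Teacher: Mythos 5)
Your proposal is correct and follows essentially the same route as the paper: both verify that the hypotheses make $\s\colon X\to X$ an everywhere defined automorphism of the affine variety $X$, invoke Bell's theorem to obtain condition (i) of Theorem~\ref{theo:main}, and conclude via the equivalence with condition (iii). You merely spell out the verification (constancy of $\det(A)$ giving a polynomial inverse) that the paper leaves implicit.
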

\begin{proof}
The assumptions imply that the rational map \mbox{$\s\colon X\dashrightarrow X$} of Theorem \ref{theo:main} is an everywhere defined automorphism. Thus the validity of (i) (or (ii)) of Theorem \ref{theo:main} follows again from \cite{Bell:AGeneralizedSkolemMahlerLechTheorem}.
%
%
%
%
%
%
%
\end{proof}

In the classical case of the Skolem-Mahler-Lech theorem, i.e., for constant coefficients, there has been a vital interest in finding effective versions of the theorem, i.e., on bounding the data defining the finite union of arithmetic progressions in terms of data of the difference equation. See e.g. \cite{Evertseetal:LinearEquationsinVariables} or \cite{Amorosoetal:linearrecurrence} and the references given there. In this context it seems worthwhile to note the following fact:

\begin{prop} \label{prop:period}
Let $k$ be a field of characteristic zero and \[E=\s^n(y)+h_{n-1}(z)\s^{n-1}(y)+\cdots+h_0(z)y=0\] a linear difference equation over $k(z)$ such that
$h_0\neq 0$ and such that the equivalent statements of Theorem \ref{theo:main} are satisfied, e.g., $h_{n-1},\ldots,h_1\in k[z]$ and $h_0\in k\smallsetminus\{0\}$. Then for every solution $f\in k^\N$ of $E$, the set $\{i\in\N| \ f(i)=0\}$ is a finite union of arithmetic progressions of period less than or equal to the period of the Picard-Vessiot ring $R\subset \seq_k$ associated to $A(E)$.
\end{prop}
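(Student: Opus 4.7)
The plan is to extract the bound on the period from the argument already given in the (iii) $\Rightarrow$ (i) direction of Theorem \ref{theo:main}, combined with Corollary \ref{cor:SML1} to locate $f$ inside the Picard-Vessiot ring $R \subset \seq_k$.

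First I would invoke the algebraic observation used in the proof of Corollary \ref{cor:SML1}: the column vector $(f, \s(f), \ldots, \s^{n-1}(f))^t$ is a $k$-linear combination of the columns of a fundamental solution matrix $Y \in \Gl_n(R)$, so in particular the equivalence class of $f$ in $\seq_k$ lies in $R$.

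Next I would apply the structural decomposition $R = e_0 R \oplus \cdots \oplus e_{l-1} R$, where $l$ is the period of $R$ and the orthogonal idempotents satisfy $\s(e_j) = e_{j+1 \bmod l}$ with each $e_j R$ an integral domain. As observed in the proof of Theorem \ref{theo:main}, the uniqueness of such a family inside $\seq_k$ forces each $e_j$ to be the equivalence class of the indicator function of $j + \N l$ (after renumbering). Writing $f = e_0 f + \cdots + e_{l-1} f$, each summand lies in the integral domain $e_j R$. By assumption a Picard-Vessiot extension $L$ for $\s(y) = A(E)y$ exists inside $\seq_k$, and $L = e_0 L \oplus \cdots \oplus e_{l-1} L$ with each $e_j L$ a field. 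Hence whenever $e_j f \neq 0$, it is invertible in $e_j L \subset \seq_k$, so the sequence $e_j f$ is eventually nonzero on every element of $j + \N l$; whenever $e_j f = 0$, the sequence $f$ is (eventually) zero on all of $j + \N l$.

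Putting these observations together, $\{i \in \N : f(i) = 0\}$ agrees, up to a finite set, with $\bigcup_{j\,:\,e_j f = 0}(j + \N l)$, a finite union of arithmetic progressions of period $l$. The main ``obstacle'' is essentially notational rather than mathematical: the proposition is a remark extracting information already produced by the proof of Theorem \ref{theo:main}, coupled with the elementary fact that a sequential solution of the scalar equation $E$ automatically sits inside the Picard-Vessiot ring of the associated companion system.
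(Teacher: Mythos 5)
Your proposal is correct and follows exactly the paper's intended route: the paper's own proof is a one-line reference to Corollary \ref{cor:SML1} (which places $f$ in $R$) together with the idempotent decomposition argument from the proof of (iii)$\Rightarrow$(i) in Theorem \ref{theo:main}, which is precisely what you have unpacked. No gaps.
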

\begin{proof}
This is clear from Corollary \ref{cor:SML1} and the proof of the implication (iii)$\Rightarrow$(i) in Theorem \ref{theo:main}.
\end{proof}

We note that if $k$ is algebraically closed, then the Picard-Vessiot ring is unique up to $k(z)$-$\s$-isomorphisms. So the period of the Picard-Vessiot ring is an abstract algebraic invariant of the difference equation $E$, which is, a priori, not at all related to sequences.
The period of the Picard-Vessiot ring is also given by the number of connected components of the Galois group (\cite[Prop. 1.20, p. 15]{SingerPut:difference}).
Corollary 4.13 in \cite{ChatzidakisHardouinSinger:OntheDefinitionsOfDifferenceGaloisgroups} gives yet another way of computing the period of the Picard-Vessiot ring.

If $k$ is not algebraically closed one can replace the period of $R\subset \seq_k$ by the $m$-invariant of any Picard-Vessiot ring. See \cite[Prop. 4.9]{ChatzidakisHardouinSinger:OntheDefinitionsOfDifferenceGaloisgroups}.

\bigskip

The following simple example shows that the period $l$ of the Picard-Vessiot ring of $A(E)$ is not the optimal bound to write the set of zeros of any solution of $E$ as a finite union of arithmetic progressions of period $l$. However, the period of the Picard-Vessiot ring is the optimal bound to write the set of zeros of any sequence that can be obtained from solutions of $E$ by taking sums and products as a finite union of arithmetic progressions of period $l$.

\begin{ex}
We consider the Fibonacci recurrence $\s^2(y)-\s(y)-y=0$ over $\mathbb{C}(z)$. The associated matrix equation is
\begin{equation} \label{eq:fibonacci}
\s\left(\begin{array}{c} y_1 \\ y_2 \end{array}\right)=\left(\begin{array}{cc} 0 & 1 \\ 1 & 1 \end{array}\right)\left(\begin{array}{c} y_1 \\ y_2 \end{array}\right)
\end{equation}

Let $\alpha_1=\tfrac{1+\sqrt{5}}{2}$ and $\alpha_2=\tfrac{1-\sqrt{5}}{2}$ be the two solutions of the associated characteristic polynomial $t^2-t-1=0$.
Then $f_1:=(\alpha_1^n)_{n\in\N}$ and $f_2:=(\alpha_2^n)_{n\in\N}$ are two $\mathbb{C}$-linearly independent solutions in $\seq_\mathbb{C}$. The matrix
\[Y=\left(\begin{array}{cc} f_1 & f_2 \\ \s(f_1) & \s(f_2)\end{array}\right)=\left(\begin{array}{cc} f_1 & f_2 \\ \alpha_1f_1 & \alpha_2 f_2 \end{array}\right)
\]
is a fundamental solution matrix for equation (\ref{eq:fibonacci}).
It follows that $R=\mathbb{C}(z)[f_1,f_2]\subset\seq_k$ is the Picard-Vessiot ring of equation (\ref{eq:fibonacci}).
Because $f_1f_2=((-1)^n)_{n\in\N}$ there are two idempotent elements $\tfrac{f_1f_2+1}{2}=(1,0,1,\ldots)$ and $\tfrac{f_1f_2-1}{2}=(0,1,0,\ldots)$ in $R$. Consequently the period of $R$ is greater or equal to two. (It is not too hard to work out that $(f_1f_2+1)(f_1f_2-1)$ is the only algebraic relation between $f_1,f_2$ so that the period of $R$ is precisely two.)

On the other hand, any solution of the Fibonacci recurrence assumes the value zero only a finite number of times.
\end{ex}

\section{Three equivalent conjectures}

In this last section we show that the three problems SML, PV and a certain special case of DML are equivalent when considered ``globally'', i.e., without fixing the equation.

\begin{theo}
Let $k$ be a field of characteristic zero. The following statements are equivalent:
\begin{enumerate}
\item For every $A\in\Gl_n(k(z))$ the rational map $\s\colon X\dashrightarrow X$ on $X=\A^1_k\times \Gl_{n,k}$ defined by $\s(b,B)=(b+1,A(b)B)$ has the following property: For every closed subvariety $Y\subset X$ and every $x\in X(k)$ with $\oo_\s(x)$ defined the set $\{i\in\N| \ \s^i(x)\in Y\}$ is a finite union of arithmetic progressions.
\item For every solution $f\in k^\N$ of a linear difference equation
\[\s^n(y)+h_{n-1}\s^{n-1}(y)+\cdots+h_0y=0\] with $h_{n-1},\ldots,h_0\in k(z)$ the set
$\{i\in\N|\ f(i)=0\}$ is a finite union of arithmetic progressions.
\item For every $A\in\Gl_n(k(z))$ there exists a Picard-Vessiot extension of $k(z)$ for $\s(y)=Ay$ inside $\seq_k$.
\end{enumerate}
\end{theo}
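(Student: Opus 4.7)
The plan is to deduce each of the three implications from Theorem \ref{theo:main} applied at a single $A$, with the lemmas and corollaries of Section \ref{sec:PV} serving as dictionaries between the dynamical, the sequential, and the Picard-Vessiot sides. The implication (i) $\Rightarrow$ (iii) is then immediate: condition (i) of the global theorem is a priori stronger than condition (i) of Theorem \ref{theo:main} (it imposes no restriction on the base point $b$), so for every $A \in \Gl_n(k(z))$ the equivalent conditions of Theorem \ref{theo:main} hold, and in particular a Picard-Vessiot extension for $\s(y) = Ay$ exists inside $\seq_k$.

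For (iii) $\Rightarrow$ (ii), fix a linear difference equation $E$ of order $n$ and a solution $f \in k^\N$, and argue by induction on $n$. If the constant term $h_0$ of $E$ is non-zero, then $A(E) \in \Gl_n(k(z))$, so the global hypothesis (iii) applied to $A(E)$ yields condition (iii) of Theorem \ref{theo:main} for $A(E)$, and Corollary \ref{cor:SML1} concludes. If $h_0 = 0$, then for $i \gg 0$ the sequence $\s(f)$ satisfies the order-$(n-1)$ equation obtained by deleting the $h_0 y$ term, so the inductive hypothesis describes its zero set as a finite union of arithmetic progressions; the zero set of $f$ itself then differs only by a shift, so it too has this form.

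The substantial direction is (ii) $\Rightarrow$ (i). Fix $A$, a closed subvariety $Y \subset X$, and $x = (b, B) \in X(k)$ with $\oo_\s(x)$ defined. A preliminary translation reduces to the case $b \in \N$: setting $\tilde A(z) := A(z + b) \in \Gl_n(k(z))$, $\tilde x := (0, B)$, and $\tilde \s(c, C) := (c + 1, \tilde A(c) C)$, the automorphism $\tau : (c, C) \mapsto (c + b, C)$ of $X$ satisfies $\s^i(x) = \tau(\tilde \s^i(\tilde x))$ for every $i \geq 0$, transporting the question for $(A, x, Y)$ to the same question for $(\tilde A, \tilde x, \tau^{-1}(Y))$. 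Since every closed subvariety is a finite intersection of hypersurfaces, and the intersection of two finite unions of arithmetic progressions is again such a union, one may further assume $Y = \V(f)$ with $f \in k[X]$. I then invoke the $k(z)$-$\s$-morphism $\psi : U \to \seq_k$ from the proof of Theorem \ref{theo:main}: its image is the Picard-Vessiot ring $R \subset \seq_k$, and $\{i : \s^i(x) \in Y\} = \{i : \psi(f)(i + b) = 0\}$. The crux is that $\psi(f) \in R$ is $\s$-finite over $k(z)$ by Lemma \ref{lemma:sfinite}, and hence satisfies a linear difference equation of exactly the shape appearing in (ii); thus global (ii) applies and shows that the zero set of $\psi(f)$ is a finite union of arithmetic progressions, a property preserved under the shift by $b$. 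The main obstacle is conceptual rather than technical: one must recognise that a polynomial expression $f(\s^i(x))$ in orbit coordinates is realised, via $\psi$, as an $\s$-finite sequence, so that the sequential hypothesis (ii) becomes applicable to a dynamical question.
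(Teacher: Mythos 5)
Your proof is correct, but it closes the equivalence along a different cycle than the paper. You prove (i)$\Rightarrow$(iii)$\Rightarrow$(ii)$\Rightarrow$(i), whereas the paper proves (i)$\Rightarrow$(iii), (iii)$\Rightarrow$(i), (iii)$\Rightarrow$(ii) and (ii)$\Rightarrow$(iii); in particular its fourth implication is (ii)$\Rightarrow$(iii) rather than your direct (ii)$\Rightarrow$(i). The shared engine is Lemma \ref{lemma:sfinite}: both arguments must feed an element of the Picard-Vessiot ring $R=\psi(U)\subset\seq_k$ into hypothesis (ii), and $\s$-finiteness is exactly what licenses this. The paper uses it to show that a non-zero divisor $g$ of $R$ with infinitely many zeros would vanish on a whole progression $j+\N l$ with $l\geq 2$, forcing $g\s(g)\cdots\s^{l-1}(g)=0$ and a contradiction; this produces the Picard-Vessiot extension, and (i) is then recovered by routing back through Theorem \ref{theo:main}, whose (iii)$\Rightarrow$(i) step relies on the idempotent decomposition of $R$. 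You instead apply (ii) directly to the single sequence $\psi(f)$ recording $f(\s^i(x))$, which bypasses both the non-zero-divisor argument and the idempotent decomposition; the price is that you must carry out the translation $\widetilde{A}(z)=A(z+b)$ reducing to a base point with first coordinate in $\N$ yourself, which the paper performs inside its (iii)$\Rightarrow$(i) step (and you should phrase the final identification of zero sets in the translated coordinates, where the shift is by $0$, rather than mixing it with the original $b$). Your explicit induction on the order when $h_0=0$ in (iii)$\Rightarrow$(ii) merely fills in a reduction the paper states without proof. Both routes are complete; yours is marginally more economical for the equivalence itself, while the paper's organization isolates the standalone implication (ii)$\Rightarrow$(iii) (existence of Picard-Vessiot extensions in $\seq_k$ from the Skolem-Mahler-Lech property), which is arguably the step of independent interest.
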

\begin{proof}
The implication (i)$\Rightarrow$(iii) is immediate from Theorem \ref{theo:main}. To show (iii)$\Rightarrow$(i) let $A\in\Gl_n(k(z))$ and $x=(b,B)\in X(k)$ such that $\oo_\s(x)$ is defined. Moreover let $Y\subset X$ be a closed subvariety.


Set $\widetilde{A}(z)=A(z+b)\in\Gl_n(k(z))$ and $\widetilde{x}=(0,B)\in X$.
Let $\widetilde{\s}\colon X\dashrightarrow X$ be defined by $\widetilde{A}$ and let $\widetilde{Y}$ be the subvariety of $X$ defined by the equations $f(z+b,Z)$ where $f(z,Z)$ is a defining equation of $Y$. Then $\widetilde{\s}^i(\widetilde{x})=(i,A(b+i-1)\cdots A(b)B)$
for $i\geq 0$. In particular, $\oo_{\widetilde{\s}}(\widetilde{x})$ is defined. Moreover, $\s^i(x)=(b+i, A(b+i-1)\cdots A(b)B)$ lies in $Y$ if and only if $\widetilde{\s}^i(\widetilde{x})$ lies in $\widetilde{Y}$.
By assumption statement (iii) of Theorem \ref{theo:main} is satisfied for $\widetilde{A}$. So it follows from the implication (iii)$\Rightarrow$(i) of Theorem \ref{theo:main} applied to the $\sim$-setting that
\[\{i\in\N |\ \s^i(x)\in Y\}=\{i\in\N|\ \widetilde{\s}^i(\widetilde{x})\in\widetilde{Y}\}\]
is a finite union of arithmetic progressions.

To show (iii)$\Rightarrow$(ii) one can assume that $h_0\neq 0$. Then the claim follows from Corollary~\ref{cor:SML1}.

Finally we show that (ii) implies (iii). We already know (Proposition \ref{prop:existence}) that there exists a Picard-Vessiot ring $R$ for $\s(y)=Ay$ inside $\seq_k$. So we only have to show that every non-zero divisor of $R$ is invertible in $\seq_k$. Let $g\in R$ be a non-zero divisor in $R$ and suppose for a contradiction that $g$ is not invertible in $\seq_k$. This means that $g$ assumes the value zero an infinite number of times. As noted in Lemma \ref{lemma:sfinite} the element $g\in\seq_k$ satisfies a linear difference equation with coefficients in $k(z)$. So by (ii) there must exist an arithmetic progression $j+\N l$ with $l\geq 2$ such that $g$ vanishes on $j+\N l$. But then $g\s(g)\cdots\s^{l-1}(g)=0$. Replacing $l$ with a smaller integer if necessary, we can assume that $\s(g)\cdots\s^{l-1}(g)$ is non-zero. This contradicts the assumption that $g$ is a non-zero divisor in $R$.
\end{proof}

\bibliographystyle{alpha}
\bibliography{bibdata}
\end{document}